\theoremstyle{plain} 
\newtheorem{theorem}{Theorem}
\title{A Lagrangean Relaxation Algorithm for the Simple Plant Location Problem with Preferences}
\footnotesize\date{May 2018}
\author{
Xavier Cabezas\\
\footnotesize The University of Edinburgh\\
\footnotesize \texttt{J.X.Cabezas@sms.ed.ac.uk}\\
\footnotesize Escuela Superior Polit\'ecnica del Litoral\\
\footnotesize \texttt{joxacabe@espol.edu.ec}\\  
\and
Sergio Garc\'ia\\
\footnotesize The University of Edinburgh\\
\footnotesize \texttt{Sergio.Garcia-Quiles@ed.ac.uk} \\ 
}
\begin{document}

\maketitle

\begin{abstract} 
\noindent The Simple Plant Location Problem with Order (SPLPO) is a variant of the Simple Plant Location Problem (SPLP), where the customers have preferences over the facilities which will serve them. In particular, customers define their preferences by ranking each of the potential facilities. Even though the SPLP has been widely studied in the literature, the SPLPO has been studied much less and the size of the instances that can be solved is very limited. In this paper, we propose a heuristic that uses a Lagrangean relaxation output as a starting point of a semi-Lagrangean relaxation algorithm to find good feasible solutions (often the optimal solution). We carry out a computational study to illustrate the good performance of our method. 
\end{abstract}

\noindent {\bf Keywords:} Simple Plant Location Problem, Lagrangean relaxation, semi-Lagrangean relaxa\-tion, Preferences.

\section{Introduction}
\label{intro}
The Simple Plant Location Problem with Order (SPLPO) is a variant of the Simple Plant Location Problem (SPLP), where the customers have preferences over the facilities which will serve them. In particular, customers define their preferences by ranking each of the potential facilities.

Let $I=\{1,\ldots\,m\}$ be a set of customers and $J=\{1,\ldots\,n\}$ a set of possible sites for opening facilities. Unit costs $c_{ij}\geq 0$ for supplying the demand of customer~$i$ from facility $j$ and costs $f_j \geq 0$ for opening a facility at $j$ are also considered. It is said that $k$ is \emph{i-worse} than $j$ if customer $i$ prefers facility $j$ to $k$ and it is written as $k < _ij$. We define $W_{ij}=\{k \in J \mid k < _ij\}$ as the set of facilities $k$ strictly \emph{i-worse} than $j$, its complement as $\overline{W_{ij}}$ and $W_{ij} \cup \{j\}$ as $W_{ij}'$. Let $x_{ij}$ be a decision  variable that represents the fraction of the demand required by customer $i$ and supplied by facility $j$. Since no capacities are considered, this demand can always be covered completely by one single facility. Therefore, we can guarantee that there is an optimal solution where values variables~$x_{ij}$ are in $\{0,1\}$. Let $y_j$ be a binary variable such that $y_j=1$ if a facility is open at location $j$ and $y_j=0$ otherwise.\\

The SPLPO formulation \citep{Canovas} is as follows:

\begin{align}
\hspace{1.2 cm} \text{Min} \qquad \qquad & \sum_{\mathclap{i \in I}}\sum_{\mathclap{j \in J}} c_{ij}x_{ij} + \sum_{j \in J} f_jy_j,& \nonumber  \\
\nonumber\\
\text{subject to} \hspace{0.5cm} & \sum_{\mathclap{j \in J}} x_{ij} = 1,  &  \forall i \in I, \hspace{0.8cm} \label{LimitFacilities} \\[0.05em]
& x_{ij} \leq y_j, & \hspace{0.05cm}  \forall i \in I, \forall j \in J, \hspace{0.8cm}  \label{OpenyIfx} \\[0.05em]
& \sum_{\mathclap{k \in \overline{W_{ij}}}} x_{ik} \geq y_j, & \forall i \in I, \hspace{0.05cm}  \forall j \in J, \hspace{0.8cm} \label{Preference} \\[0.05em]
& x_{ij} \geq 0, & \forall i \in I, \hspace{0.05cm}  \forall j \in J, \hspace{0.8cm} \label{Posi} \\[0.1em]
& y_j \in \{0,1\}, & \hspace{0.2cm} \forall j \in J. \hspace{0.8cm}\label{Bina}
\end{align}

Equalities (\ref{LimitFacilities}) ensure that every customer $i$ will be supplied by exactly one facility $j$, they are called \emph{assignment constraints}. Constraints (\ref{OpenyIfx}) ensure that if a costumer $i$ is supplied by a facility $j$ then $j$ must be opened, they are usually called \emph{variable upper bounds} (VUBs). Inequalities (\ref{Preference}) model the customers' preference orderings.

There are in the literature many studies on how to solve the SPLP using different methods and lagrangean relaxation is one of the most successful. For example, in \citet{Cornuejols1977}, the authors presented an application of Lagrangean relaxation to solve the SPLP in the context of location of bank accounts. They also proposed a heuristic algorithm and studied the Lagrangean dual to obtain lower and upper bounds. They also provided a bound for the relative error of these methods. \citet{Beltran2006} suggested, defined and applied the technique of semi-Lagrangean relaxation to the $p$-median problem. Some years later the study was extended to the SPLP, obtaining very good results. The method basically takes advantage of the linear formulation of the problems. First, it splits equality constraints $Ax=b$ into $Ax \leq b$ and $Ax \geq b$, and then relaxes the second one. The new model has the same objective value as the original problem (it closes the duality gap), but with the cost of making the new problem more difficult to solve. However, it has some properties that can be exploited. A summary of this method will be reviewed later. Another reduced form of this method was proposed by \citet{Monabbati}, which called it a surrogate semi-Lagrangean relaxation. A new algorithm for the dual problem using Lagrangean heuristics for both the original and surrogated version of the semi-Lagrangean relaxation can be found in \citet{Jornsten2016}.

The SPLPO has been studied much less and the main results are on finding new valid inequalities to strengthen the original formulation. \citet{Canovas} provided a new family of valid constraints which were used in combination with a preprocessing analysis. Another, more general, family of valid inequalities can be found in \citet{Vasilyev2013} with a polyhedral study. A branch and cut method was proposed by \citet{Vasilyev2010}.

Since these papers use exact methods that struggle to solve large instances, the aim of this paper is to develop a procedure that allows to solve the SPLPO efficiently in a heuristic way by using Lagrangean and semi-Lagrangean relaxation techniques. We propose a variable fixing heuristic that uses a Lagrangean relaxation output as the starting point of a semi-Lagrangean relaxation to find good feasible solutions (often the optimal solution).

The rest of the paper is organized as follows: Section \ref{rlr} provides a review of Lagrangean and semi-Lagrangean relaxation. SPLPO formulations for these two techniques are showed in Section \ref{lrSPLPO} and \ref{slrSPLPO}, along with the methods that will be used later to solve their respective dual problems. The complete method that we propose is presented in Section \ref{suAD}, where all the algorithms proposed in previous sections will be combined to build a heuristic procedure. In Section \ref{compr} we run a computational study that shows the good performance of our method. Finally some conclusions are given in Section \ref{conclusions}.

\section{Lagrangean and Semi-Lagrangean Relaxation Background} \label{rlr}

Let $P$ be a problem of the form:

$$min_x\{f(x)=cx \mid Ax \leq b, Cx \leq d, x \in X\},$$

\noindent where:

\begin{itemize}
\item The set $X$ may contain integrality constraints.
\item The family of constraints $Ax \leq b$ will be assumed complicated. (i.e., the problem $P$ without them is easier to solve).
\end{itemize}

The family $Ax \leq b$ can be placed in the objective function with vector of coefficients (Lagrangean multipliers) $\lambda$ which work as penalties when they are violated. It yields the Lagrangean relaxation problem $LR(\lambda)$ related to $P$ with multipliers $\lambda$:

\begin{equation}\label{LR}
LR(\lambda)=min_x\{f(x)+\lambda(Ax - b) \mid Cx \leq d, x \in X,\lambda \geq 0\},
\end{equation}

\noindent with its associated Lagrangean dual problem:

\begin{align}
LD_\lambda&=max_{\lambda \geq 0} min_x\{f(x)+\lambda(Ax - b) \mid Cx \leq d, x \in X,\lambda \geq 0\} \nonumber \\
&=max_{\lambda \geq 0} LR(\lambda).
\end{align}

Let $v(p)$ be the optimal objective function value for a particular problem $p$. For all $x \in X$ feasible for $P$ and any $\lambda \geq 0$, $f(x)+\lambda(Ax - b) \leq f(x)$. Therefore, $v(LD_\lambda) \leq v(P)$ and $v(LR(\lambda)) \leq v(P)$. Furthermore, if we supposed that $\{x \in X \mid Cx \leq X\}$ is a bounded polyhedron, there is a finite family of extreme points $x$ in its the convex hull and therefore the objective function of $LR(\lambda)$ is a piecewise linear concave function on $\lambda$. Thus, there are some points where the function is not differentiable. Additionally, it can be proved that if $x^*$ be an optimal solution for $LR(\lambda^*)$, $Ax^*-b$ is a subgradient of $LR(\lambda)$ at point $\lambda^*$.

Also, if $LP(P)$ is problem $P$ without integrality constraints (that is, its linear relaxation), then $v(LP(P)) \leq v(LD_\lambda) \leq v(P)$. And, $v(LP(P)) = v(LD_\lambda)$ whenever $v(LP(LD_\lambda))=v(LD_\lambda)$ (which is called integrality property).\\

Consider now the problem $P'$:
 
$$min_x\{f(x)=cx \mid Ax=b, x \in X\},$$

\noindent where:

\begin{itemize}
\item The set $X$ contains integrality constraints.
\item $A$, $b$ and $c$ are non-negative.
\end{itemize}

The semi-lagrangean relaxation problem $SLR(\lambda)$ related to $P'$ is:
\begin{equation}\label{SLR}
SLR(\lambda)=min_x\{f(x)+\lambda(b-Ax) \mid Ax \leq b, x \in X,\lambda \geq 0\}.
\end{equation}

After having split $Ax=b$, inequality $Ax \geq b$ has been relaxed with a vector multiplier $\lambda$ whereas inequality $Ax \leq b$ has been kept. Its semi-Lagrangean dual problem is then:

\begin{equation}
SLD_\lambda=max_{\lambda \geq 0} SLR(\lambda).
\end{equation}

It is clear that $v(LR(\lambda)) \leq v(SLR(\lambda))$ because $SLR(\lambda)$ has more constraints. Thus, $v(LP(P')) \leq v(LD_\lambda) \leq v(SLD_\lambda) \leq v(P')$. \citet{Beltran2006} proved that the semi-Lagrangean dual problem has the same optimal value than $P'$, i.e., $v(P')=v(SLD_\lambda)$. They also proved that the objective function of $SLR(\lambda)$ is concave, non-decreasing on its domain and $b-Ax$ is a subgradient at point $\lambda$. Moreover, there is an interval $[\lambda^*,+ \infty)$ where with any of its elements we met the same optimal solution of $SLR(\lambda)$. For details, see \citep{Geoffrion1974,Fisher2004,Beltran2006,Beltran2009}.

\section{A Lagrangean Relaxation for SPLPO}\label{lrSPLPO}

In this section we consider a Lagrangean Relaxation for the SPLPO. Our first result shows that the proposed model is easy to solve.

If we relax constraints (\ref{LimitFacilities}) and (\ref{Preference}), then they are moved to the objective function with penalty coefficients (multipliers) when they are violated, thus obtaining the following Lagrangean relaxation problem $LR(\mu,\lambda)$:

\begin{eqnarray*}
\min_{(x,y)} \hspace{0.2 cm}  \sum_{\mathclap{i}}\sum_{\mathclap{j}} c_{ij}x_{ij} + \sum_{j} f_jy_j + \sum_i \mu_i \left( 1-\sum_j x_{ij} \right)   +  \sum_{\mathclap{i}} \sum_{\mathclap{j}} \lambda_{ij} \left[y_j - \sum_{\mathclap{k \in \overline{W_{ij}}}} x_{ik} \right], \\
= \min_{(x,y)} \hspace{0.2 cm} \sum_i\sum_j \left(c_{ij}-\mu_{i} \right) x_{ij} - \sum_i\sum_j \lambda_{ij} \sum_{\mathclap{k \in \overline{W_{ij}}}} x_{ik} + \sum_j \left( f_j + \sum_i \lambda_{ij} \right) y_j + \sum_i \mu_i ,
\end{eqnarray*}

\noindent $\text{subject to:} \hspace{0.2 cm} (\ref{OpenyIfx}), (\ref{Posi}) \hspace{0.2 cm} \text{and} \hspace{0.2 cm} (\ref{Bina}).$\\

\noindent The multiplier vectors $\mu$ and $\lambda$ in $LR(\mu,\lambda)$ are unrestricted in sign and nonnegative, respectively.

Let $F(P)$ be the set of feasible solutions of problem $P$. Then for all $(x,y) \in F(\text{SPLPO})$ the objective function of $LR(\mu,\lambda)$ evaluated in $(x,y)$ is always less than or equal to the objective function of $P$ evaluated in $(x,y)$. Therefore  $v(LR(\mu,\lambda)) \leq v(\text{SPLPO}) $.

In order to obtain the best lower bound for SPLPO, we need to solve the following Lagrangean dual problem: 

$$LD_{\mu\lambda} = \max_{\mu \in \mathbb{R},\lambda \geq 0} LR(\mu,\lambda).$$

Suppose that each customer $i$ ranks the different potential facilities $j$ with a number $p_{ij} \in \{1,\ldots,n\}$ with $1$ and $n$ the most and the least preferred, respectively. Since each multiplier $\lambda_{ij}$ in a term of $\sum_i\sum_j \lambda_{ij} \sum_{k \in \overline{W_{ij}}} x_{ik}$ in $LR(\mu,\lambda)$ will be multiplied by a sum of $p_{ij}$ variables $x_{ik}$ with $k\geq_ij$, then each $x_{ij}$ will be multiplied by a sum of $(n-p_{ij}+1)$ values $\lambda_{ik}$ with $k \leq _ij$. Therefore:

$$\sum_i\sum_j \lambda_{ij} \sum_{\mathclap{k \in \overline{W_{ij}}}} x_{ik} = \sum_i \sum_j \Bigg(\hspace{0.5 cm}   \sum_{\mathclap{\substack{k \in W'_{ij}\\|W'_{ij}|=n-p_{ij}+1}}} \lambda_{ik} \Bigg) x_{ij},$$

\noindent and $LR(\mu,\lambda)$ can be rewritten as:

\begin{equation*}
LR(\mu,\lambda) = \min_{(x,y)} \hspace{0.2 cm} \sum_i\sum_j \Bigg( c_{ij}-\mu_{i} -  \sum_{\mathclap{\substack{k \in W'_{ij}}}} \lambda_{ik} \Bigg) x_{ij} + \sum_j \left( f_j + \sum_i \lambda_{ij} \right) y_j + \sum_i \mu_i,
\end{equation*}

\noindent $\text{subject to:} \hspace{0.2 cm} (\ref{OpenyIfx}), (\ref{Posi}) \hspace{0.2 cm} \text{and} \hspace{0.2 cm} (\ref{Bina})$.\\

As in \citet{Cornuejols1977}, this Lagrangean problem is easy to solve analytically for fixed vectors $\mu$ and $\lambda$. If we define $\Lambda_{ij}$ as:

$$\Lambda_{ij}=\sum_{\mathclap{\substack{k \in W'_{ij}}}} \lambda_{ik},$$

\noindent then we have the following result:

\begin{theorem}\label{thmsgm}

An optimal solution for $LR(\mu,\lambda)$ can be obtained as follows: 

\begin{equation*}
y_j = \left\{
\begin{array}{rl}
1, & \textup{if } \sum_i \min (0,c_{ij}-\mu_i-\Lambda_{ij})+\left( f_j + \sum_i \lambda_{ij} \right) < 0,\\
0, & \textup{otherwise}.
\end{array} \right.
\end{equation*}

and,

\begin{equation*}
x_{ij} = \left\{
\begin{array}{rl}
1, & \textup{if } y_j=1 \textup{ and } (c_{ij}-\mu_i-\Lambda_{ij}) < 0,\\
0, & \textup{otherwise}.
\end{array} \right.
\end{equation*}

\end{theorem}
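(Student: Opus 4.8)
The plan is to exploit the fact that, once the assignment constraints (\ref{LimitFacilities}) and the preference constraints (\ref{Preference}) have been dualised, the only constraints left are the variable upper bounds (\ref{OpenyIfx}), nonnegativity (\ref{Posi}) and integrality (\ref{Bina}). Crucially, none of these couples variables attached to two different facilities. Hence, after discarding the additive constant $\sum_i \mu_i$ (which does not affect the minimiser), the objective of $LR(\mu,\lambda)$ separates into $n$ independent subproblems, one per facility $j$:
\begin{equation*}
\min_{\substack{y_j \in \{0,1\}\\ 0 \le x_{ij} \le y_j}} \; \sum_i (c_{ij}-\mu_i-\Lambda_{ij})\,x_{ij} + \Big(f_j+\sum_i \lambda_{ij}\Big) y_j .
\end{equation*}
First I would solve each of these subproblems by conditioning on the binary value of $y_j$.

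If $y_j=0$, then $0 \le x_{ij} \le y_j$ forces $x_{ij}=0$ for every $i$, so the subproblem value is $0$. If $y_j=1$, the bound becomes $0 \le x_{ij} \le 1$, and since the variables $x_{ij}$ now enter a separable linear objective over the box $[0,1]^m$ with no further coupling, each can be optimised independently: it is optimal to set $x_{ij}=1$ when its coefficient $c_{ij}-\mu_i-\Lambda_{ij}$ is negative and $x_{ij}=0$ when it is nonnegative. This yields the minimum $\sum_i \min(0,\,c_{ij}-\mu_i-\Lambda_{ij})$ plus the fixed term $f_j+\sum_i\lambda_{ij}$. The same argument simultaneously shows that an integral optimal $x$ exists, because a linear function on a box attains its minimum at a vertex.

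Comparing the two cases, opening facility $j$ (i.e. $y_j=1$) is strictly better than keeping it closed precisely when $\sum_i \min(0,\,c_{ij}-\mu_i-\Lambda_{ij}) + \big(f_j+\sum_i\lambda_{ij}\big) < 0$, which is exactly the stated rule for $y_j$; when this expression is nonnegative the value $0$ attained at $y_j=0$ is at least as good, so the tie may be broken towards $y_j=0$ with no loss. Substituting the chosen $y_j$ back into the per-facility optimisation of $x$ then recovers the stated formula, namely $x_{ij}=1$ iff $y_j=1$ and $c_{ij}-\mu_i-\Lambda_{ij}<0$.

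I expect the only delicate point, rather than a genuine obstacle, to be the bookkeeping that $x_{ij}$ is bounded above only through $y_j$ (there is no explicit $x_{ij}\le 1$ once (\ref{LimitFacilities}) is relaxed), so that the feasible set for $x_{ij}$ collapses to $\{0\}$ when $y_j=0$ and to $[0,1]$ when $y_j=1$; together with checking that the tie-breaking inside the $\min$ and in the $y_j$ rule is mutually consistent, so that the displayed pair $(x,y)$ is not merely feasible but attains the separable minimum facility by facility, and therefore the global minimum of $LR(\mu,\lambda)$.
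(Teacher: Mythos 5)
Your proof is correct and follows essentially the same route as the paper's: both decompose $LR(\mu,\lambda)$ facility by facility, determine the optimal $x_{ij}$ conditional on $y_j$ via the sign of the coefficient $c_{ij}-\mu_i-\Lambda_{ij}$, and then choose $y_j$ by the sign of the resulting per-facility value $\rho_j(\mu,\lambda)$. Your added remarks on the box $[0,1]^m$ and vertex attainment merely make explicit what the paper leaves implicit.
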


\begin{proof}

We have that:

\begin{eqnarray*}
LR(\mu,\lambda) &=& \min_{(x,y)} \hspace{0.2 cm} \sum_i\sum_j \Bigg( c_{ij}-\mu_{i} - \Lambda_{ij} \Bigg) x_{ij} + \sum_j \left( f_j + \sum_i \lambda_{ij} \right) y_j + \sum_i \mu_i, \\
& = &  \min_{(x,y)} \hspace{0.2 cm} \sum_j \left[ \sum_i \Bigg( c_{ij}-\mu_{i} - \Lambda_{ij} \Bigg) x_{ij} + \left( f_j + \sum_i \lambda_{ij} \right) y_j \right] + \sum_i \mu_i,
\end{eqnarray*}

\noindent $\text{subject to:} \hspace{0.2 cm} (\ref{OpenyIfx}), (\ref{Posi}) \hspace{0.2 cm} \text{and} \hspace{0.2 cm} (\ref{Bina}).$\\

As a consequence of constraints (\ref{OpenyIfx}) and for fixed vectors $\mu$ and $\lambda$, the optimal values for $x_{ij}$ will be $x_{ij}=1$ if $y_j=1$ and $c_{ij}-\mu_i-\Lambda_{ij} < 0$, otherwise $x_{ij}=0$. Then, if we define $\rho_j(\mu,\lambda)=\sum_i \min (0,c_{ij}-\mu_i-\Lambda_{ij})+\left( f_j + \sum_i \lambda_{ij} \right)$, then the optimal vector $y$ can be obtained by solving the following minimization problem:

\begin{equation*}
\min \sum_j \rho_j(\mu,\lambda) y_j,
\end{equation*}

\noindent $\text{subject to:} \hspace{0.2 cm} y_j \in \{0,1\}.$\\

The solution to this problem is straightforward.
\end{proof}

An issue with this model is that $LR(\mu,\lambda)$ has the integrality property, that is, its optimal value is equal to the standard linear relaxation $LP(\text{SPLPO})$. Furthermore, the values obtained of the function $LR(\mu,\lambda)$ during the search of the solution for $LD_{\mu\lambda}$ are infeasible to the original problem SPLPO. So, as an alternative, we propose to use the solution of this problem as a starting point for another procedure that allows us to find feasible solutions to SPLPO.

\subsection{Subgradient Method for the Lagrangean Dual $LD_{\mu\lambda}$} \label{smSPLPO}

The subgradient method was originally proposed by \citet{Held1971} and validated by \citet{Held1974}. Given multipliers $\lambda$ and $\mu$, this method tries to optimize $LD$ by taking steps along a subgradient of $LR(\mu,\lambda)$. A sketch of the whole procedure is given in Algorithm \ref{SGM}. 

\begin{algorithm}[ht!]
\caption{Subgradient method (SG) for SPLPO.}
\label{SGM}
\vspace{0.20 cm}
\footnotesize

Let $LD=max_{(\mu,\lambda)}LR(\mu,\lambda)$ with $\mu \in \mathbb{R}$ and $\lambda \geq 0$.\\ 

\vspace{0.3 cm} 

Step 1. \textbf{(Initialization)}. Let $LR_{AIM}$ by a heuristic method. Set $\beta=2$. Let $k$ be an integer number. Let $q$ be a number in $[0,1]$. Let $[\mu^0,\lambda^0]$ be a starting point. \\

Step 2. \textbf{(Obtaining values $x^0_{ij}, \forall i,j$ and $y^0_j, \forall j$)}. Find $LR_{best}^0=LR(\mu^0,\lambda^0)$. Set $iter=0$.\\

Step 3. \textbf{(Finding a subgradient)}. Find a subgradient $s^{iter}$ for $LR(\mu^{iter},\lambda^{iter})$.\\

Step 4. \textbf{(Stop criterion)}. If $s^{iter}=0$, STOP and $[\mu^{iter},\lambda^{iter}]$ is optimal.\\
Otherwise, go to Step 5.\\

Step 5. \textbf{(Step size)}. Let $\alpha^{iter}=\beta\frac{LR_{AIM}-LR(\mu^{iter},\lambda^{iter})}{\|s^{iter}\|_2^2}$.\\
If $LR(\mu^{iter},\lambda^{iter})$ does not improve for $k$ consecutive iterations, set $\beta=\beta*q$.\\

Step 6. \textbf{(Updating multipliers)} Set\\
$[\mu^{iter+1},\lambda^{iter+1}]=[(\mu^{iter}+\alpha^{iter}s_{\mu}^{iter}),max(0,\lambda^{iter}+\alpha^{iter}s_{\lambda}^{iter})]$.\\

Step 7. \textbf{(Updating incumbent)}. Let\\
$LR_{best}^{iter+1}=max\{LR_{best}^{iter},LR(\mu^{iter+1},\lambda^{iter+1})\}$. Update $iter=iter+1$.\\

Step 8. \textbf{(Stop criterion)}. If $iter=MAXiter$, STOP. Otherwise go to Step 3.

\end{algorithm}

As seen in Steps 2 and 5, at each iteration, we need to solve an instance of $LR(\mu,\lambda)$. Each of them is easy to solve, as shown in Theorem~\ref{thmsgm}. However, it is important to note that the whole procedure could still be slow, that is, slower than solving $LP(\text{SPLPO})$ with a commercial linear programming solver.

It is easy to see that:

$$s=\begin{bmatrix} {\displaystyle  1-\sum_{j \in J} x_{ij}; \forall i} \\ {\displaystyle y_j-\sum_{k \in \overline{W_{ij}}} x_{ik}; \forall i,j} \end{bmatrix} = \begin{bmatrix} s_{\mu} \\ s_{\lambda}\end{bmatrix}  \in \mathbb{R}^{m(n+1)},$$

\noindent is a subgradient for $LR(\mu,\lambda)$. If this vector is $0$, then the procedure ends.

In our computational experiments, that will be showed later, an upper bound for $LR_{AIM}$ in Step 5 is found using a simple heuristic. See Algorithm \ref{h2}. First, it opens a facility that supplies all customers with the lowest operating cost and it is removed from the set $J'=J$. Then, for each unopened one, each customer compares and chooses the most preferred facility between it and its previously assigned supplier. The new cost is saved. The new open facility is the one with the lowest operating cost. It is removed from $J'$. This is repeated until $J'=\{\}$.

\begin{algorithm}[ht!]
\caption{Heuristic to find an upper bound for SPLPO (Hc).}
\label{h2}
\vspace{0.20 cm}
\footnotesize

Let $G(I \times J,E=\{\})$ be a bipartite graph.\\

\vspace{0.3 cm}

Step 1. Find $j_0 \in J$ such that $\sum_i c_{ij_0}=min\{\sum_i c_{i1},\ldots,\sum_i c_{in}\}$.

\hspace{0.95 cm} Set $J'=J\setminus \{j_0\}$, $E=\{(i,j_0)\}$ and $TC_{prev}=\sum_i c_{ij_0}$.\\

Step 2. For all $j \in J'$, do:

\hspace{0.95 cm} Set $E_j=\{\}$.

\hspace{1.5 cm}  For all $i \in I$, do:

\hspace{1.5 cm}  Find $k_{pref}$ such that $p_{ik_{pref}}=min[\{p_{ij}\} \cup \{p_{ik} \mid (i,k) \in E\}]$.

\hspace{1.5 cm} Set $E_j= E_j \cup \{(i,k_{pref})\}$.

\hspace{1.5 cm} Compute $TC_j=\sum_{(i,j) \in E_j} c_{ij}$. \\

Step 3. Find $j_0$ such that $TC_{j_0}=min\{ TC_j \mid j=1,\ldots,n \}$. Set $J'=J' \setminus \{j_0\}$ and $E=E_{j_0}$.\\

Step 4. If $J'=\{\}$, STOP. Otherwise go to Step 2.

\end{algorithm}

We also tried another heuristic. It is basically the same that for Hc with a different Step 4 which allows the algorithm to stop earlier. We name it Hs (see Algorithm \ref{Hs}) and its results will be reported later.

\begin{algorithm}[ht!]
\caption{Hs.}
\label{Hs}
\vspace{0.20 cm}
\footnotesize
Step 1..3. Same as Hc.\\

Step 4. If $CT_{j_0} \geq CT_{prev}$ then STOP. Otherwise, set $CT_{prev}=CT_{j_0}$ and go to Step 2.
\end{algorithm}

It is possible, due to an overestimation of $LR_{AIM}$, that the function $LR(\mu,\lambda)$ does not improve for many iterations. This can be overcome by setting the para\-meter $\beta$ to a fixed value (for example, 2) and reducing it slowly.

In Step 6, we need vector $\lambda$ to be nonnegative, therefore in the nonne\-gative orthant, i.e.,  $[\lambda_i]^+=max\{0,\lambda_i\}$, for all of its components. $\mu$ must remain unchanged as it is an unrestricted vector.

Further details can be found in \citep{GuignardTutorial,IntegerProgramming} and \citet{Poljak} for the step size in Step 5 in Algorithm \ref{SGM}.

\section{A Semi-Lagrangean Relaxation for SPLPO} \label{slrSPLPO}

Now we use the technique of the Semi-Lagrangean relaxation, as this leads to close the duality gap. The equality constraints (\ref{LimitFacilities}) have been split into two inequalities $\sum_{j \in J} x_{ij} \leq 1$ and $\sum_{j \in J} x_{ij} \geq 1$ to obtain the following model:

\begin{eqnarray*}
SLR(\gamma) & = & \min_{(x,y)} \hspace{0.2 cm}  \sum_{\mathclap{i}}\sum_{\mathclap{j}} c_{ij}x_{ij} + \sum_{j} f_jy_j + \sum_i \gamma_i \left( 1-\sum_j x_{ij} \right),\\
& = & \min_{(x,y)} \hspace{0.2 cm} \sum_i\sum_j \left(c_{ij}-\gamma_{i} \right) x_{ij} + \sum_j f_j y_j + \sum_i \gamma_i,\\
& = & \min_{(x,y)} \hspace{0.2 cm} \sum_j \left(\sum_i \left(c_{ij}-\gamma_{i} \right) x_{ij} +  f_j y_j \right) + \sum_i \gamma_i,
\end{eqnarray*}

\noindent $\text{subject to:} \hspace{0.2 cm} (\ref{OpenyIfx}), (\ref{Preference}), (\ref{Posi}), (\ref{Bina})$, and $\sum_{j \in J} x_{ij} \leq 1$. Every component of the multipliers vector $\gamma$ is nonnegative.\\

As mentioned in Section \ref{rlr}, the objective function is concave and non-decreasing in its domain. Therefore, its semi-Lagrangean dual problem:

$$SLD_\gamma = \max_{\gamma \geq 0} SLR(\gamma),$$ 

\noindent can be solved using an ascent method. Also, as pointed out, there is a set $Q=[\gamma^*,+ \infty)$ such that, for any $q \in Q$ the optimum of $SLR(\gamma)$ is met.

For the SPLP (no preferences), \citet{Beltran2009} proved that there is a closed interval where the search of the multipliers could be done. Following the same idea, we provide the next two results for the SPLPO:

\begin{theorem} \label{ubbox}
Let $cp_i=max_j\{c_{ij}+f_j\}$ and let $cp=(cp_1,\ldots,cp_m)$ be the maximum of the costs for each costumer $i$ associated to each facility $j$ and the vector of these costs, respectively. If $\gamma \geq cp$, then $\gamma \in Q$.
\end{theorem}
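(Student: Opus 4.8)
The plan is to establish the reverse of the inequality $v(SLR(\gamma))\le v(\text{SPLPO})$ recorded in Section~\ref{rlr}: I will show that for every $\gamma\ge cp$ one has $v(SLR(\gamma))\ge v(\text{SPLPO})$, so that the two values coincide. Since the objective of $SLR(\gamma)$ is non-decreasing and bounded above by $v(\text{SPLPO})$, attaining this common value is precisely the statement $\gamma\in Q$. In fact I will prove the slightly stronger fact that \emph{every} point $(x,y)$ feasible for $SLR(\gamma)$ already has objective at least $v(\text{SPLPO})$, distinguishing the cases $y\neq 0$ and $y=0$.

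For the main case, fix a feasible $(x,y)$ with $y\neq 0$. The key observation is that constraints (\ref{Preference}), (\ref{OpenyIfx}) and the retained inequality $\sum_{j}x_{ij}\le 1$ force each customer to be \emph{fully} assigned. Indeed, since every customer ranks all of $J$ and some facility is open, customer $i$ has a most preferred open facility $j^{\ast}$. Every facility strictly preferred to $j^{\ast}$ is closed, so by (\ref{OpenyIfx}) it carries no assignment; hence the only surviving term in $\sum_{k\in\overline{W_{ij^{\ast}}}}x_{ik}$ is $x_{ij^{\ast}}$, and (\ref{Preference}) written for $j^{\ast}$ reduces to $x_{ij^{\ast}}\ge y_{j^{\ast}}=1$. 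Together with $\sum_{j}x_{ij}\le 1$ and (\ref{Posi}) this yields $\sum_{j}x_{ij}=1$. Consequently the relaxed inequality $\sum_j x_{ij}\ge 1$ holds as well, so $(x,y)$ is feasible for SPLPO; moreover the penalty term $\sum_i\gamma_i(1-\sum_j x_{ij})$ vanishes, so the $SLR(\gamma)$ objective at $(x,y)$ equals its SPLPO objective, which is at least $v(\text{SPLPO})$.

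It remains to treat $y=0$. Then (\ref{OpenyIfx}) forces $x=0$ and the $SLR(\gamma)$ objective equals $\sum_i\gamma_i$. This is where the hypothesis $\gamma\ge cp$ enters: fixing any single facility $j_0$ yields a feasible SPLPO solution (open only $j_0$ and serve every customer from it), so $v(\text{SPLPO})\le\sum_i c_{ij_0}+f_{j_0}\le\sum_i(c_{ij_0}+f_{j_0})\le\sum_i cp_i\le\sum_i\gamma_i$, the penultimate inequality because $cp_i\ge c_{ij_0}+f_{j_0}$ for every $i$. Combining the two cases gives $v(SLR(\gamma))\ge v(\text{SPLPO})$, hence $v(SLR(\gamma))=v(\text{SPLPO})$, i.e. $\gamma\in Q$.

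The only real content is the forcing argument of the second paragraph; once it is in place the remaining steps are bookkeeping. The point to get right there is that $\overline{W_{ij^{\ast}}}$ meets the set of open facilities only in $j^{\ast}$, which relies on the ranking being complete (so $j^{\ast}$ exists) and on the absence of capacities (so any open facility may serve any customer). The vector $cp$ is used only to dominate $v(\text{SPLPO})$ in the degenerate configuration $y=0$; any componentwise lower bound on $\gamma$ whose coordinates sum to at least $v(\text{SPLPO})$ would suffice, but $cp$ has the advantage of being computable a priori from the data.
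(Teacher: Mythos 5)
Your proof is correct, and it takes a genuinely different route from the paper's. The paper argues at optimality: since $\gamma\ge cp$ makes every combined cost $(c_{ij}-\gamma_i)+f_j$ non-positive, an optimal solution of $SLR(\gamma)$ never gains by leaving a customer unassigned, so $\sum_j x_{ij}=1$ holds at an optimum and the gap closes; the interaction with the preference constraints is handled only informally (``it is always possible to set $x_{ij}=1$ and $y_j=1$ \ldots\ meeting all the constraints''). You instead argue over the whole feasible set and isolate where the hypothesis is really needed: constraints (\ref{Preference}) and (\ref{OpenyIfx}) together with the retained inequality $\sum_j x_{ij}\le 1$ already force full assignment at \emph{every} feasible point with $y\neq 0$, independently of $\gamma$ (in fact your detour through the most preferred open facility is not even needed: for any single open $j'$, constraint (\ref{Preference}) gives $\sum_j x_{ij}\ge\sum_{k\in\overline{W_{ij'}}}x_{ik}\ge y_{j'}=1$ directly), so such points are SPLPO-feasible with identical objective value, and the hypothesis $\gamma\ge cp$ serves only to make the all-closed solution, of value $\sum_i\gamma_i$, dominate $v(\text{SPLPO})$. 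This buys two things the paper's proof does not: a rigorous treatment of the preference constraints, and the sharper conclusion $v(SLR(\gamma))=\min\{v(\text{SPLPO}),\sum_i\gamma_i\}$, which shows that any $\gamma$ with $\sum_i\gamma_i\ge v(\text{SPLPO})$ already closes the gap, the componentwise bound $cp$ being merely a sufficient condition computable a priori from the data. The paper's argument, in exchange, is shorter and connects directly to the sign of the reduced costs that drive the dual ascent.
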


\begin{proof}
As we know, the semi-Lagrangean relaxation closes the duality gap if for all $i$, $\sum_{j \in J} x_{ij}=1$. Assume that $SLR(\gamma)<\infty$, otherwise the proposition is trivially true. By hypothesis, $cp_i-\gamma_i \leq 0$ for all $i \in I$. If we choose $j'$ such that $cp_i=c_{ij'}+f_{j'}$, it turns out in $(c_{ij'}-\gamma_i)+f_{j'} \leq 0$, and this inequality is true for any $j$ since $j'$ gives the maximum among all $c_{ij}+f_j$. Therefore, the event $\sum_{j \in J} x_{ij}=0$ can not happen at an optimal solution, because it is always possible set $x_{ij}=1$ and $y_{j}=1$ for all $i$ and $j$ meeting all the constraints. 
\end{proof}

\begin{theorem} \label{lbbox}
For each $i \in I$, let $c_i^1 \leq \ldots \leq c_i^{n}$ be the sorted costs $c_{ij}$. If $\gamma < c^1$ then $\gamma \notin Q$.
\end{theorem}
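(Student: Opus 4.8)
My plan is to show that when $\gamma$ lies strictly below $c^1$ componentwise (i.e.\ $\gamma_i < c_i^1$ for every $i \in I$), the relaxed problem $SLR(\gamma)$ attains its optimum at the trivial point and its optimal value falls strictly short of $v(\text{SPLPO})$. Since, by the interval structure of $Q$ together with Beltran's gap-closing result, one has $\gamma \in Q$ if and only if $v(SLR(\gamma)) = v(\text{SPLPO})$ (recall $v(SLR(\gamma)) \le v(\text{SPLPO})$ always), establishing the strict inequality $v(SLR(\gamma)) < v(\text{SPLPO})$ is exactly what is needed to conclude $\gamma \notin Q$. This is the same conceptual hinge used in the proof of Theorem~\ref{ubbox}, only now pushing in the opposite direction.

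First I would note that $c_i^1 = \min_j c_{ij}$, so the hypothesis $\gamma_i < c_i^1$ gives $c_{ij} - \gamma_i > 0$ for all $i$ and $j$. Reading the objective of $SLR(\gamma)$ in the grouped form $\sum_j\bigl(\sum_i (c_{ij}-\gamma_i) x_{ij} + f_j y_j\bigr) + \sum_i \gamma_i$, every coefficient multiplying an $x_{ij}$ is then strictly positive and every coefficient $f_j$ multiplying a $y_j$ is nonnegative. I would then check that $(x,y)=(0,0)$ is feasible: the VUBs~(\ref{OpenyIfx}) hold as $0 \le 0$, the preference inequalities~(\ref{Preference}) hold as $0 \ge 0$, the kept constraints $\sum_j x_{ij} \le 1$, nonnegativity~(\ref{Posi}) and $y_j \in \{0,1\}$ are all trivially satisfied. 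Because the variable part of the objective is a sum of nonnegative terms, it is minimized by setting every variable to zero, so $(0,0)$ is optimal and $v(SLR(\gamma)) = \sum_i \gamma_i$.

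To finish, I would bound $v(\text{SPLPO})$ from below. In any feasible SPLPO solution the assignment constraint $\sum_j x_{ij}=1$ together with $x \ge 0$ forces $\sum_j c_{ij} x_{ij} \ge c_i^1$, while the opening costs $\sum_j f_j y_j$ are nonnegative; summing over $i$ yields $v(\text{SPLPO}) \ge \sum_i c_i^1$. Combining this with the value computed above and with $\gamma_i < c_i^1$ for every $i$,
\[
v(SLR(\gamma)) = \sum_i \gamma_i < \sum_i c_i^1 \le v(\text{SPLPO}),
\]
so the duality gap is not closed at $\gamma$, and hence $\gamma \notin Q$.

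I do not expect a genuine obstacle here, as the argument mirrors the proof of Theorem~\ref{ubbox}. The only points needing care are (i) fixing the componentwise reading of the vector inequality $\gamma < c^1$, which is precisely what makes all the reduced coefficients $c_{ij}-\gamma_i$ positive \emph{simultaneously}, and (ii) confirming that the preference constraints~(\ref{Preference}) and the VUBs~(\ref{OpenyIfx}) impose no hidden lower bound that would prevent $(0,0)$ from being feasible and optimal. Both are immediate once one observes that these constraints are homogeneous and are satisfied at the origin.
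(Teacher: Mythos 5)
Your proof is correct and follows essentially the same route as the paper: the hypothesis $\gamma_i < c_i^1 = \min_j c_{ij}$ makes every reduced cost $c_{ij}-\gamma_i$ strictly positive, which forces $x^*=0$ at any optimum of $SLR(\gamma)$, so the relaxed assignment equalities cannot hold and the gap is not closed. Your version merely makes explicit the final step the paper leaves implicit, by computing $v(SLR(\gamma))=\sum_i \gamma_i$ and comparing it with the lower bound $\sum_i c_i^1 \le v(\text{SPLPO})$; this is a harmless (indeed slightly more rigorous) elaboration, not a different argument.
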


\begin{proof}
By hypothesis $c^1_i - \gamma_i >0 $, then $c^{j}_i - \gamma_i >0 $  for all $j \in J$. In that case, $x^*_{ij}=0$ for all $j \in J$ in any optimal solution $x^*_\gamma$. Therefore, $\sum_{j \in J} x_{ij}=1$ can not happen at an optimal solution and $\gamma \notin Q$. 
\end{proof}

The result of Theorem \ref{ubbox} is weaker than the obtained by \citet{Beltran2009} for SPLP in the following sense. They choose $cp_i$ equal to $min_j\{c_{ij}+f_j\}$, but with this, there is no guarantee that the preference constraints hold. We can set $x^*_{ij'}=1$ and  $y_{j'}=1$ for the particular $j'$ such that $min_j\{c_{ij}+f_j\}=c_{ij'}+f_{j'}$ but not necessarily for all $j$, as in this way, are not available all the possible combinations of $x$'s and $y$'s that consider all customers preferences. Therefore, we are taking the risk of obtaining a non optimal solution.

The interval of search for $\gamma$ components is then:

\begin{equation}
B=\{\gamma \mid c^1 < \gamma \leq cp\}. 
\end{equation}

Using the sorted costs $c_i^1 \leq \ldots \leq c_i^{n}$, each component $\gamma_i$ of $\gamma$ can be either in an interval of the form $I_j=(c_i^j,c_i^{j+1}]$ where $j \in \{1,\ldots,m-1\}$ or out of it. For the first case, there are infinite values of $\gamma_i$ that can belong to a single interval $(c_i^j,c_i^{j+1}]$, but each of them has the same effect on the optimal value of $SLR(\gamma)$. This is because going from an interval $I_j$ to the next $I_{j+1}$ could imply a change in the choice of the arc $(i,j)$ to the arc $(i,j+1)$ in the bipartite graph $G_{\gamma}=(I \times J,E=\{(i,j)\mid x_{ij}=1 \text{ in the solution of } SLR(\gamma) \})$, which means a change in the solution. Hence, we just need a single $\gamma_i$ representative of the intervals. As $\gamma$ goes to infinity all combined costs $(c_{ij}-\gamma_i)+f_{j}$ become negative, and hence the semi-Lagrangean relaxation problem can be as difficult to solve as the original SPLPO problem. Then, it is always convenient to choose a $\gamma_i \in I_i$ as smaller as possible, that is, at an epsilon $\epsilon$ distance from the lower bound of an interval. Also, it is easy to check that $c_i^{n}$ is always less than $cp_i$ if $f_j \geq 0$. These ideas will be applied in the ascent method used later.

\subsection{Dual Ascent Method for the Semi-Lagrangean Dual $SLD_{\gamma}$} \label{daSPLPO}

In general, a dual ascent algorithm modifies a current value of multipliers in order to achieve a steady increase in $SLR(\gamma)$, see \citet{Bilde1977}. In our case this is possible due to the aforementioned properties. The method we used is explained under Algorithm \ref{DAM}.   

\begin{algorithm}[ht!]
\caption{Dual ascent method (DA) for $SLD_\gamma$.}
\label{DAM}
\vspace{0.20 cm}
\footnotesize

Let $c_i^1 \leq \ldots \leq c_i^{n}$ be the sorted costs $c_{ij}$ and let $\gamma^0$ be an initial vector $\gamma$.\\

\vspace{0.10 cm} 

Step 1. \textbf{(Initialization)}. Set $\epsilon>0$, $iter=0$. If $k>n$, $c_i^{k}=cp_i$. 

\hspace{1.0 cm} For all $i \in I$ do:

\hspace{1.0 cm} $cp_i=max_j\{c_{ij}+f_j\}$,

\hspace{1.5 cm} If $\gamma_i^0<c_i^1$, then $\gamma_i^0=c_i^1+\epsilon$,

\hspace{1.5 cm} Else,

\hspace{1.5 cm} If $\gamma_i^0>c_i^{n}$, then $\gamma_i^0=c_i^{n}$,

\hspace{1.5 cm} Else,

\hspace{1.5 cm} find a $j_i$ such that $\gamma_i$ belongs to $(c_i^{j_i},c_i^{j_i+1}]$, and set $\gamma^0_i=c_i^{j_i}+\epsilon$.\\

Step 2. \textbf{(Obtaining values $x^{iter}_{ij}$ and $y^{iter}_j$)}. Solve $SLR(\gamma^{iter})$.\\

Step 3. \textbf{(Finding a subgradient)}. Find a subgradient $s^{iter}$ of $SLR(\gamma^{iter})$.\\

Step 4. \textbf{(Stop criterion)}. If $s^{iter}=0$, STOP and $\gamma^{iter}$ is optimal. Otherwise, go to Step 5.\\

Step 5. \textbf{(Updating multipliers)} For each $i$ such that $s^{iter}_i=1$, $j_i=j_i+1$ and\\ $\gamma^{iter+1}_i=min\{c_i^{j_i} + \epsilon,cp_i\}$. Set $iter=iter+1$. Go to step 2.

\end{algorithm}

In Step 1, the algorithm gives an appropriate position of the terms of an initial multipliers vector in the intervals $I_j$. In Step 2, the procedure needs to solve a sequence of problems $SLR(\gamma)$, but due to the preference constraints, they are not as easy to solve as in the Lagrangean relaxation case $LR(\mu,\lambda)$ proposed before. However, it is possible to set some variables $x_{ij}$ equal to $0$ in advance, in order to make it easier. Every $x_{ij}$ must be $0$ if $c_{ij} - \gamma_i>0$.

A subgradient for $SLR(\gamma)$ is computed in Step 3 as:

$$s= \left[ 1-\sum_{j \in J} x_{ij}; \forall i \right]=\left[ s_\gamma \right] \in \mathbb{R}^m,$$

\noindent where each component of $s$ belongs to $\{0,1\}$ as the $\sum_{j \in J} x_{ij} \leq 1$ must be satisfied.

In Step 4, a stop criterion is given. Finally, the multipliers are updated (increased) by jumping from the current to the next $I_j$ interval for each component~$i$ in $\gamma$.

\section{Speeding Up the Search for the Optimal Solution} \label{suAD}

After a certain number of iterations of DA that has been fixed beforehand, we apply a variable fixing heuristic VFH that takes, among all $y_j=1$, a percentage of them by sorting this set of $y_j$'s by a determined criterion. Then, it sets all the selected $y_j$ equal to $1$. Finally the subproblem is solved. The method is described under Algorithm \ref{FVH}.

\begin{algorithm}[h!]
\caption{Variable fixing heuristic (VFH) to speed up DA.}
\label{FVH}
\vspace{0.20 cm}
\footnotesize

Let $SLR_{\gamma}$ be an instance of DA after $k$ iterations. \\
Let $Y_{\gamma}$ be a set of $y_j$'s equal to 1 at instance $SLR_{\gamma}$.

\vspace{0.3 cm} 

Step 1. Set $ps \in [0,1] \in \mathbb{R}$. Sort the elements of $Y_{\gamma}$ such that $y_j \prec y_k$ if $\sum_i(c_{ij}+f_j) < \sum_i(c_{ik}+f_k)$.\\

Step 2. Choose the first $ps \times 100\%$ smallest elements from the sorted $Y_{\gamma}$ to form the set $Y_{\gamma}^{subset}$.\\

Step 3. Set, $y_j=1$ for all $y_j \in Y_{\gamma}^{subset}$.\\

Step 4. Solve the original $SPLPO$.

\end{algorithm}

Step 1 shows the criterion that we used to sort variables $y_j$. We also tried to sort it by $y_j \prec y_k$ if $\sum_ip_{ij} < \sum_ip_{ik}$, where the $p$'s are the preferences. We obtained similar results, but the option proposed in Algorithm \ref{FVH} was slightly better. 

Another important issue is the starting point for the dual ascent method. We have tried two different starting $\gamma$ vectors, $\gamma=0$ and $\gamma$ equal to $\mu$, which is one of the multipliers that can be found using the subgradient method SG for $LD_{\mu\lambda}$ after a certain number of iterations. This $\mu$ is the penalization associated with the relaxed family of constraints (\ref{LimitFacilities}). We obtained better results with the second approach. This is because each sub-problem $LR(\mu,\lambda)$ to be solved in the optimization of $LD_{\mu\lambda}$ is easy, see Theorem \ref{thmsgm}. For SG we used $\mu_i^0=min_j\{c_{ij}+f_j\}$ for all $i \in I$ and $\lambda_{ij}=0$ for all $i \in I$ and $j \in J$.
 
The whole accelerated dual ascent procedure is presented in Algorithm \ref{complete}.

\begin{algorithm}[h!]
\caption{Accelerated dual ascent algorithm. (ADA).}
\label{complete}
\vspace{0.20 cm}
\footnotesize


Step 1. Set $\mu_i^0=min_j\{c_{ij}+f_j\}$ for all $i \in I$ and $\lambda_{ij}=0$ for all $i \in$ and $j \in J$.\\

Step 2. Run $\text{SG}(\mu^0,\lambda^0)$ during $sg\_iter$ iterations. Find the iteration $best\_sg\_iter$  where is the best value of $LR(\mu,\lambda)$. Set $\gamma^0=\mu^{best\_sg\_iter}$.\\

Step 3. During $da\_iter$ iterations, run $\text{DA}(\gamma^0)$.\\

Step 4. During $vfh\_iter$ run $\text{DA}$. Get $Y_{\gamma}$ and run VFH. Save the solution. \\

Step 5. Find the best solution in the history and get best values for $x_{ij}$ and $y_j$.

\end{algorithm}

\section{Computational results} \label{compr}

In this section we present the computational results obtained after having applied the algorithms shown before. The experiments have been carried out on a PC with Intel$\textsuperscript{\textregistered}$ Xeon$\textsuperscript{\textregistered}$ 3.40 GHz processor and 16 Gb of RAM under a Windows$\textsuperscript{\textregistered}$ 7 operative system. All the procedures and algorithms have been written using the version 4.0.3 of the Mosel Xpress language and when a MIP problem needed to be solved we used FICO Xpress$\textsuperscript{\textregistered}$ version 8.0.

The instances were, at first, taken from \cite{Canovas}, which are based on the Beasley's OR-Library \citep[see][]{Beasley}. These are:\\

\noindent 131, 132, 133, 134: $m=50$ and $n=50$\\
a75\_50, b75\_50, c75\_50: $m=75$ and $n=50$\\
a100\_75, b100\_75, c100\_75: $m=100$ and $n=75$\\

Additional data sets were generated using the same algorithm proposed in \cite{Canovas}:\\

\noindent a125\_100, b125\_100, c125\_100: $m=125$ and $n=100$,\\
a150\_100, b150\_100, c150\_100: $m=150$ and $n=100$.\\

The headers of the tables have the following meanings:

\begin{itemize}[noitemsep]
\item Prob: Name of the problem.
\item Opt: Optimal objective function value of the problem P.
\item y\_j: Number of opened facilities.
\item bestUB: Best upper bound.
\item GAP$_{\text{o}}$ (\%): $\text{bestUB-Opt}$ ($\frac{\text{bestUB-Opt}}{\text{Opt}} \times 100\%$). The absolute and relative gap between the optimal and the value of an algorithm.
\item GAP$_{\text{LP}}$: $\frac{\text{LP(P)-SG}}{\text{LP(P)}} \times 100\%$. The relative gap between the linear relaxation and the lower bound of the Lagrangian relaxation.
\item LP(P): Linear relaxation value for problem P.
\item SG: Best value using the subgradient method for $LD_{\mu\lambda}$.
\item iter (itbsol.): Number of iterations (iteration where best solution was found).
\item t (Tt): CPU time in seconds (Total time).
\item sol10\%: First solution found that is within 10\% or less of the optimal solution.
\item UB (LB): Upper bound (Lower bound).
\end{itemize}

\begin{table}[h!]
  \centering
  \caption{Comparison between upper bounds heuristics Hs and Hc.}
\setlength{\tabcolsep}{2.8pt} 
\renewcommand{\arraystretch}{1.3} 
  \footnotesize
    \begin{tabular}{rrrrrrrrrrrrr}
    \toprule
        \multicolumn{1}{c}{} & \multicolumn{1}{c}{} & \multicolumn{1}{c}{} & \multicolumn{4}{c}{\textbf{Hs}} & \multicolumn{4}{c}{\textbf{Hc} }\\
        \cmidrule(l){4-7}
        \cmidrule(l){8-11}
    \multicolumn{1}{c}{\textbf{Prob}} & \multicolumn{1}{c}{\textbf{Opt}} & \multicolumn{1}{c}{\textbf{y\_j}} & \multicolumn{1}{c}{\textbf{bestUB}} & \multicolumn{1}{c}{\textbf{GAP$_{\text{o}}$}} & \multicolumn{1}{c}{\textbf{GAP$_{\text{o}}$\%}} & \multicolumn{1}{c}{\textbf{y\_j}} & \multicolumn{1}{c}{\textbf{bestUB}} & \multicolumn{1}{c}{\textbf{GAP$_{\text{o}}$}} & \multicolumn{1}{c}{\textbf{GAP$_{\text{o}}$\%}} & \multicolumn{1}{c}{\textbf{y\_j}}\\
    \midrule
    131\_1 & 1001440 & 6     & 1001440 & 0     & 0.00\% & 6     & 1001440 & 0     & 0.00\% & 6 \\
    131\_2 & 982517 & 9     & 1003100 & 20583 & 2.09\% & 5     & 982517 & 0     & 0.00\% & 9 \\
    131\_3 & 1039853 & 10    & 1248143 & 208290 & 20.03\% & 1     & 1139012 & 99159 & 9.54\% & 9 \\
    131\_4 & 1028447 & 9     & 1248143 & 219696 & 21.36\% & 1     & 1049791 & 21344 & 2.08\% & 7 \\
    132\_1 & 1122750 & 8     & 1248143 & 125393 & 11.17\% & 1     & 1239961 & 117211 & 10.44\% & 7 \\
    132\_2 & 1157722 & 9     & 1248143 & 90421 & 7.81\% & 1     & 1199057 & 41335 & 3.57\% & 9 \\
    132\_3 & 1146301 & 6     & 1248143 & 101842 & 8.88\% & 1     & 1146301 & 0     & 0.00\% & 6 \\
    132\_4 & 1036779 & 5     & 1248143 & 211364 & 20.39\% & 1     & 1036779 & 0     & 0.00\% & 5 \\
    133\_1 & 1103272 & 7     & 1248143 & 144871 & 13.13\% & 1     & 1103272 & 0     & 0.00\% & 7 \\
    133\_2 & 1035443 & 5     & 1248143 & 212700 & 20.54\% & 1     & 1100713 & 65270 & 6.30\% & 7 \\
    133\_3 & 1171331 & 6     & 1248143 & 76812 & 6.56\% & 1     & 1208198 & 36867 & 3.15\% & 4 \\
    133\_4 & 1083636 & 9     & 1248143 & 164507 & 15.18\% & 1     & 1090582 & 6946  & 0.64\% & 9 \\
    134\_1 & 1179639 & 4     & 1248143 & 68504 & 5.81\% & 1     & 1179639 & 0     & 0.00\% & 4 \\
    134\_2 & 1121633 & 7     & 1248143 & 126510 & 11.28\% & 1     & 1205809 & 84176 & 7.50\% & 5 \\
    134\_3 & 1171409 & 6     & 1248143 & 76734 & 6.55\% & 1     & 1173693 & 2284  & 0.19\% & 7 \\
    134\_4 & 1210465 & 3     & 1248143 & 37678 & 3.11\% & 1     & 1248143 & 37678 & 3.11\% & 1 \\
    \midrule
    a75\_50\_1 & 1661269 & 7     & 1787955 & 126686 & 7.63\% & 1     & 1787955 & 126686 & 7.63\% & 1 \\
    a75\_50\_2 & 1632907 & 6     & 1784848 & 151941 & 9.30\% & 2     & 1779576 & 146669 & 8.98\% & 4 \\
    a75\_50\_3 & 1632213 & 7     & 1738404 & 106191 & 6.51\% & 3     & 1738404 & 106191 & 6.51\% & 3 \\
    a75\_50\_4 & 1585028 & 5     & 1787955 & 202927 & 12.80\% & 1     & 1709978 & 124950 & 7.88\% & 5 \\
    b75\_50\_1 & 1252804 & 8     & 1374685 & 121881 & 9.73\% & 8     & 1343201 & 90397 & 7.22\% & 10 \\
    b75\_50\_2 & 1337446 & 9     & 1403629 & 66183 & 4.95\% & 5     & 1403629 & 66183 & 4.95\% & 5 \\
    b75\_50\_3 & 1249750 & 9     & 1368788 & 119038 & 9.52\% & 8     & 1368788 & 119038 & 9.52\% & 8 \\
    b75\_50\_4 & 1217508 & 9     & 1348203 & 130695 & 10.73\% & 10    & 1348203 & 130695 & 10.73\% & 10 \\
    c75\_50\_1 & 1310193 & 11    & 1390321 & 80128 & 6.12\% & 8     & 1375386 & 65193 & 4.98\% & 11 \\
    c75\_50\_2 & 1244255 & 10    & 1316595 & 72340 & 5.81\% & 11    & 1316595 & 72340 & 5.81\% & 11 \\
    c75\_50\_3 & 1201706 & 12    & 1386817 & 185111 & 15.40\% & 13    & 1386817 & 185111 & 15.40\% & 13 \\
    c75\_50\_4 & 1334782 & 11    & 1440836 & 106054 & 7.95\% & 5     & 1440836 & 106054 & 7.95\% & 5 \\
    \midrule
    a100\_75\_1 & 2286397 & 4     & 2476632 & 190235 & 8.32\% & 1     & 2459349 & 172952 & 7.56\% & 4 \\
    a100\_75\_2 & 2463187 & 3     & 2476632 & 13445 & 0.55\% & 1     & 2476632 & 13445 & 0.55\% & 1 \\
    a100\_75\_3 & 2415836 & 3     & 2476632 & 60796 & 2.52\% & 1     & 2467003 & 51167 & 2.12\% & 3 \\
    a100\_75\_4 & 2380150 & 4     & 2476632 & 96482 & 4.05\% & 1     & 2476632 & 96482 & 4.05\% & 1 \\
    b100\_75\_1 & 1950231 & 8     & 2061201 & 110970 & 5.69\% & 7     & 2061201 & 110970 & 5.69\% & 7 \\
    b100\_75\_2 & 2023097 & 8     & 2389395 & 366298 & 18.11\% & 1     & 2180286 & 157189 & 7.77\% & 8 \\
    b100\_75\_3 & 2062595 & 8     & 2133724 & 71129 & 3.45\% & 8     & 2133724 & 71129 & 3.45\% & 8 \\
    b100\_75\_4 & 1865323 & 9     & 1994265 & 128942 & 6.91\% & 7     & 1994265 & 128942 & 6.91\% & 7 \\
    c100\_75\_1 & 1843620 & 6     & 2107973 & 264353 & 14.34\% & 5     & 2090221 & 246601 & 13.38\% & 7 \\
    c100\_75\_2 & 1808867 & 11    & 2025331 & 216464 & 11.97\% & 9     & 2005071 & 196204 & 10.85\% & 13 \\
    c100\_75\_3 & 1820587 & 8     & 2019651 & 199064 & 10.93\% & 7     & 2019651 & 199064 & 10.93\% & 7 \\
    c100\_75\_4 & 1839007 & 9     & 2046525 & 207518 & 11.28\% & 8     & 2046525 & 207518 & 11.28\% & 8 \\
    \bottomrule
    \end{tabular}%
  \label{Heuristics}
\end{table}%

First, we show in Table \ref{Heuristics} the results for Algorithms Hs and Hc. As can be seen, we obtain better results with the second one. It finds the optimal solution for $6$ of the first $16$ problems and in the rest of the instances it is not too far from the optimal with an average gap of 4.65\%. All runs performed took less than 1 second whereas Xpress needs between 3 and 12 to solve these instances to optimality. The value obtained by Hc was used as an upper bound for the SG method, Algorithm~\ref{SGM}.

Table \ref{sgcr} shows the performance of the subgradient method SG (Algorithm \ref{SGM}) over the first $40$ data sets. The algorithm was stopped when the parameter $\beta$ was equal to $0$ with a starting value of $2$, and it was decreasing linearly at a rate of $0.005$ if a solution did not change after $30$ iterations. We also show the results for the first solution within a 10\% from LP(P).

After the time reported, SG did not obtain the value $LP(P)$ for any of the problems. However, as mentioned before, the SG will be useful to find initial multipliers for the dual ascend DA.

\begin{table}[h! ]
  \centering
      \caption{Computational results of subgradient method.}
  \footnotesize
\setlength{\tabcolsep}{3.4pt} 
\renewcommand{\arraystretch}{1.3} 
    \begin{tabular}{rrrrrrrrrrr}
    \toprule
        \multicolumn{1}{c}{} & \multicolumn{1}{c}{} & \multicolumn{5}{c}{\textbf{Until $\beta=0$}}  & \multicolumn{4}{c}{\textbf{First solution $<10\%$}} \\
        \cmidrule(l){3-7}
        \cmidrule(l){8-11}
    \multicolumn{1}{c}{\textbf{Prob}} & \multicolumn{1}{c}{\textbf{LP(P)}} & \multicolumn{1}{c}{\textbf{SG}} & \multicolumn{1}{c}{\textbf{itbsol}} & \multicolumn{1}{c}{\textbf{iter}} & \multicolumn{1}{c}{\textbf{t}} & \multicolumn{1}{c}{\textbf{GAP$_\text{LP}$}} & \multicolumn{1}{c}{\textbf{sol$10\%$}} & \multicolumn{1}{c}{\textbf{iter}} & \multicolumn{1}{c}{\textbf{t}} & \multicolumn{1}{c}{\textbf{GAP$_\text{LP}$}} \\
    \midrule
    131\_1 & 925492 & 881876 & 1474  & 1500  & 19    & 4.71\% & 833518 & 613   & 8     & 9.90\% \\
    131\_2 & 925195 & 872136 & 1450  & 1500  & 19    & 5.73\% & 833747 & 695   & 9     & 9.90\% \\
    131\_3 & 955447 & 929854 & 1488  & 1500  & 20    & 2.68\% & 860178 & 636   & 8     & 10.00\% \\
    131\_4 & 933025 & 903486 & 1462  & 1500  & 19    & 3.17\% & 841581 & 580   & 7     & 9.80\% \\
    132\_1 & 1007417 & 981624 & 1500  & 1500  & 19    & 2.56\% & 907222 & 479   & 6     & 9.90\% \\
    132\_2 & 990513 & 957530 & 1389  & 1425  & 18    & 3.33\% & 897874 & 445   & 6     & 9.40\% \\
    132\_3 & 1009054 & 974902 & 1500  & 1500  & 19    & 3.38\% & 911244 & 658   & 8     & 9.70\% \\
    132\_4 & 966305 & 910344 & 1388  & 1434  & 18    & 5.79\% & 869850 & 604   & 8     & 10.00\% \\
    133\_1 & 998199 & 958109 & 1395  & 1498  & 19    & 4.02\% & 898706 & 593   & 8     & 10.00\% \\
    133\_2 & 971719 & 947626 & 1443  & 1500  & 19    & 2.48\% & 882263 & 423   & 5     & 9.50\% \\
    133\_3 & 1023593 & 982907 & 1473  & 1500  & 19    & 3.97\% & 927636 & 565   & 7     & 9.40\% \\
    133\_4 & 1001253 & 948311 & 1489  & 1500  & 19    & 5.29\% & 902108 & 634   & 8     & 9.90\% \\
    134\_1 & 1226933 & 1013753 & 1192  & 1226  & 16    & 2.21\% & 933610 & 375   & 5     & 9.90\% \\
    134\_2 & 1041770 & 998540 & 1165  & 1239  & 16    & 4.15\% & 942697 & 530   & 7     & 9.50\% \\
    134\_3 & 1023070 & 994435 & 1155  & 1286  & 16    & 2.80\% & 921339 & 415   & 5     & 9.90\% \\
    134\_4 & 1050134 & 1003356 & 980   & 1026  & 13    & 4.45\% & 945217 & 439   & 6     & 10.00\% \\
\midrule
    a75\_50\_1 & 1201542 & 1169011 & 732   & 850   & 17    & 2.71\% & 1096795 & 106   & 2     & 9.40\% \\
    a75\_50\_2 & 1188359 & 1158500 & 829   & 896   & 18    & 2.51\% & 1076464 & 107   & 2     & 9.40\% \\
    a75\_50\_3 & 1189183 & 1167203 & 936   & 999   & 20    & 1.85\% & 1073799 & 157   & 3     & 9.70\% \\
    a75\_50\_4 & 1200068 & 1175405 & 854   & 937   & 19    & 2.06\% & 1094899 & 106   & 2     & 8.80\% \\
    b75\_50\_1 & 900617 & 879687 & 1118  & 1182  & 24    & 2.32\% & 812083 & 334   & 7     & 9.80\% \\
    b75\_50\_2 & 922048 & 896384 & 1111  & 1211  & 25    & 2.78\% & 830962 & 381   & 8     & 9.90\% \\
    b75\_50\_3 & 897073 & 882132 & 1208  & 1271  & 25    & 1.67\% & 808366 & 326   & 7     & 9.90\% \\
    b75\_50\_4 & 885392 & 856157 & 784   & 925   & 19    & 3.30\% & 801482 & 281   & 6     & 9.50\% \\
    c75\_50\_1 & 892837 & 879074 & 845   & 876   & 18    & 1.54\% & 804437 & 304   & 6     & 9.90\% \\
    c75\_50\_2 & 882933 & 870772 & 1120  & 1175  & 23    & 1.38\% & 797745 & 317   & 6     & 9.60\% \\
    c75\_50\_3 & 859591 & 852325 & 1013  & 1050  & 22    & 0.85\% & 786773 & 347   & 7     & 8.50\% \\
    c75\_50\_4 & 928064 & 891598 & 924   & 1084  & 22    & 3.93\% & 837696 & 415   & 8     & 9.70\% \\
\midrule
    a100\_75\_1 & 1800032 & 1725539 & 635   & 721   & 41    & 4.14\% & 1626622 & 99    & 6     & 9.60\% \\
    a100\_75\_2 & 1793377 & 1723484 & 679   & 728   & 42    & 3.90\% & 1618944 & 99    & 6     & 9.70\% \\
    a100\_75\_3 & 1788395 & 1724444 & 664   & 745   & 42    & 3.58\% & 1632328 & 103   & 6     & 8.70\% \\
    a100\_75\_4 & 1795298 & 1702235 & 482   & 678   & 38    & 5.18\% & 1620377 & 100   & 6     & 9.70\% \\
    b100\_75\_1 & 1330138 & 1289189 & 1010  & 1116  & 63    & 3.08\% & 1198182 & 187   & 11    & 9.90\% \\
    b100\_75\_2 & 1353824 & 1284338 & 652   & 691   & 39    & 5.13\% & 1225149 & 373   & 21    & 9.50\% \\
    b100\_75\_3 & 1351603 & 1306286 & 987   & 1044  & 59    & 3.35\% & 1217952 & 270   & 15    & 9.90\% \\
    b100\_75\_4 & 1332525 & 1258497 & 432   & 679   & 38    & 5.56\% & 1201093 & 157   & 6     & 9.90\% \\
    c100\_75\_1 & 1250876 & 1168284 & 510   & 710   & 40    & 6.60\% & 1127235 & 303   & 17    & 9.90\% \\
    c100\_75\_2 & 1239182 & 1154640 & 533   & 738   & 42    & 6.82\% & 1119427 & 424   & 24    & 9.70\% \\
    c100\_75\_3 & 1232462 & 1173522 & 670   & 804   & 46    & 4.78\% & 1115951 & 270   & 15    & 9.50\% \\
    c100\_75\_4 & 1243861 & 1194966 & 895   & 942   & 54    & 3.93\% & 1121515 & 356   & 20    & 9.80\% \\
    \bottomrule
    \end{tabular}%
    \label{sgcr}
\end{table}%

Finally, we can see the results for the algorithm ADA in Table \ref{ADAtable}. The routine needs parameters $sg\_iter$, $da\_iter$ and $vfh\_iter$, and different values were used for the four groups of data sets tested. These were empirically set as:\\

\noindent a75\_50, b75\_50, c75\_50: $sg\_iter=50$, $da\_iter=3$ and $fhv\_iter=2$,\\
a100\_75, b100\_75, c100\_75: $sg\_iter=100$, $da\_iter=7$ and $fhv\_iter=2$,\\
a125\_100, b125\_100, c125\_100: $sg\_iter=170$, $da\_iter=10$ and $fhv\_iter=2$,\\
a150\_100, b150\_100, c150\_100: $sg\_iter=170$, $da\_iter=12$ and $fhv\_iter=2$,

\noindent and for all the instances $ps=0.25$ (see Algorithm \ref{FVH}).\\

\begin{landscape}
\footnotesize
\setlength{\tabcolsep}{3.4pt} 
\renewcommand{\arraystretch}{1.15} 
\begin{longtable}{rrrrrrrrrrrrrrrrr}
\caption{Computational results for ADA.}\\
    \toprule
          & \multicolumn{4}{c}{\textbf{Xpress}} & \multicolumn{3}{c}{\textbf{Hc}} & \multicolumn{2}{c}{\textbf{SG}} & \multicolumn{4}{c}{\textbf{DA with VFH }} & \multicolumn{3}{c}{\textbf{ADA}} \\
        \cmidrule(l){2-5}
        \cmidrule(l){6-8}
        \cmidrule(l){9-10}
        \cmidrule(l){11-14}
        \cmidrule(l){15-17}
\multicolumn{1}{c}{\textbf{Prob}} & \multicolumn{1}{c}{\textbf{Optimal}} & \multicolumn{1}{c}{\textbf{LP}} & \multicolumn{1}{c}{\textbf{y\_j}} & \multicolumn{1}{c}{\textbf{t}} & \multicolumn{1}{c}{\textbf{bestUB}} & \multicolumn{1}{c}{\textbf{y\_j}} & \multicolumn{1}{c}{\textbf{t}} & \multicolumn{1}{c}{\textbf{LB}} & \multicolumn{1}{c}{\textbf{t}} & \multicolumn{1}{c}{\textbf{bestUB}} & \multicolumn{1}{c}{\textbf{LB}} & \multicolumn{1}{c}{\textbf{y\_j}} & \multicolumn{1}{c}{\textbf{t}} & \multicolumn{1}{c}{\textbf{Tt}} & \multicolumn{1}{c}{\textbf{imp t}} & \multicolumn{1}{c}{\textbf{GAP$_\text{o}\%$}} \\
\midrule
\endhead
\\
\multicolumn{17}{r}{{Continued on next page}}
\endfoot
\endlastfoot
    a75\_50\_1 & 1661269 & 1201542 & 7     & 16    & 1787955 & 1     & 0     & 832797 & 4     & 1662877 & 1290126 & 5     & 21    & 25    & -51\% & 0.10\% \\
    a75\_50\_2 & 1632907 & 1188359 & 6     & 21    & 1779576 & 4     & 0     & 828462 & 4     & 1648421 & 1261845 & 6     & 28    & 31    & -47\% & 0.95\% \\
    a75\_50\_3 & 1632213 & 1189183 & 7     & 18    & 1738404 & 3     & 0     & 783044 & 3     & 1632866 & 1286922 & 7     & 24    & 27    & -51\% & 0.04\% \\
    a75\_50\_4 & 1585028 & 1200068 & 5     & 18    & 1709978 & 5     & 0     & 863178 & 3     & 1585028 & 1250104 & 5     & 28    & 31    & -72\% & 0.00\% \\
    b75\_50\_1 & 1252804 & 900617 & 8     & 11    & 1343201 & 10    & 0     & 355739 & 2     & 1252804 & 951766 & 8     & 28    & 30    & -165\% & 0.00\% \\
    b75\_50\_2 & 1337446 & 922048 & 9     & 20    & 1403629 & 5     & 0     & 376669 & 3     & 1337446 & 954023 & 9     & 35    & 38    & -94\% & 0.00\% \\
    b75\_50\_3 & 1249750 & 897073 & 9     & 17    & 1368788 & 8     & 0     & 348636 & 3     & 1255947 & 975724 & 11    & 39    & 42    & -141\% & 0.50\% \\
    b75\_50\_4 & 1217508 & 885392 & 9     & 12    & 1348203 & 10    & 0     & 414596 & 3     & 1222561 & 978717 & 10    & 35    & 38    & -232\% & 0.42\% \\
    c75\_50\_1 & 1310193 & 892837 & 11    & 17    & 1375386 & 11    & 0     & 550106 & 4     & 1310193 & 972827 & 11    & 38    & 42    & -151\% & 0.00\% \\
    c75\_50\_2 & 1244255 & 882933 & 10    & 11    & 1316595 & 11    & 0     & 533586 & 3     & 1244255 & 964720 & 10    & 36    & 39    & -244\% & 0.00\% \\
    c75\_50\_3 & 1201706 & 859591 & 12    & 8     & 1386817 & 13    & 0     & 549787 & 4     & 1201706 & 934664 & 12    & 33    & 37    & -341\% & 0.00\% \\
    c75\_50\_4 & 1334782 & 928064 & 11    & 21    & 1440836 & 5     & 0     & 581078 & 4     & 1356714 & 974828 & 12    & 41    & 45    & -111\% & 1.64\% \\
\midrule
    a100\_75\_1 & 2286397 & 1800032 & 4     & 61    & 2459349 & 4     & 1     & 1649771 & 16    & 2321812 & 1950826 & 4     & 74    & 91    & -49\% & 1.55\% \\
    a100\_75\_2 & 2463187 & 1793377 & 3     & 131   & 2476632 & 4     & 1     & 1630225 & 16    & 2476632 & 1982594 & 3     & 106   & 122   & 7\%   & 0.55\% \\
    a100\_75\_3 & 2415836 & 1788395 & 3     & 148   & 2467003 & 3     & 1     & 1634629 & 16    & 2415836 & 1970116 & 3     & 99    & 116   & 22\%  & 0.00\% \\
    a100\_75\_4 & 2380150 & 1795298 & 4     & 124   & 2476632 & 3     & 1     & 1628551 & 17    & 2386981 & 1938879 & 4     & 99    & 116   & 7\%   & 0.29\% \\
    b100\_75\_1 & 1950231 & 1330138 & 8     & 624   & 2061201 & 7     & 1     & 1092987 & 17    & 1984720 & 1511894 & 10    & 125   & 142   & 77\%  & 1.77\% \\
    b100\_75\_2 & 2023097 & 1353824 & 8     & 727   & 2180286 & 8     & 1     & 1150334 & 18    & 2058495 & 1517533 & 11    & 143   & 161   & 78\%  & 1.75\% \\
    b100\_75\_3 & 2062595 & 1351603 & 8     & 841   & 2133724 & 8     & 1     & 1165120 & 18    & 2062595 & 1536082 & 8     & 268   & 285   & 66\%  & 0.00\% \\
    b100\_75\_4 & 1865323 & 1332525 & 9     & 546   & 1994265 & 7     & 1     & 1143446 & 15    & 1886598 & 1512185 & 7     & 141   & 155   & 72\%  & 1.14\% \\
    c100\_75\_1 & 1843620 & 1250876 & 6     & 430   & 2090221 & 7     & 1     & 1077180 & 20    & 1843620 & 1456543 & 6     & 215   & 235   & 45\%  & 0.00\% \\
    c100\_75\_2 & 1808867 & 1239182 & 11    & 396   & 2005071 & 13    & 1     & 1053868 & 19    & 1815373 & 1415172 & 9     & 171   & 190   & 52\%  & 0.36\% \\
    c100\_75\_3 & 1820587 & 1232462 & 8     & 423   & 2019651 & 7     & 1     & 1054526 & 16    & 1820587 & 1405713 & 8     & 194   & 210   & 50\%  & 0.00\% \\
    c100\_75\_4 & 1839007 & 1243861 & 9     & 583   & 2046525 & 8     & 1     & 1064170 & 21    & 1839007 & 1417402 & 9     & 227   & 248   & 57\%  & 0.00\% \\
\midrule
    a125\_100\_1 & 3041451 & 2392412 & 2     & 257   & 3070535 & 1     & 1     & 2235753 & 47    & 3070535 & 2539124 & 2     & 151   & 198   & 23\%  & 0.96\% \\
    a125\_100\_2 & 3040248 & 2393448 & 2     & 302   & 3070535 & 1     & 1     & 2232859 & 48    & 3070535 & 2629291 & 2     & 201   & 249   & 18\%  & 1.00\% \\
    a125\_100\_3 & 3055260 & 2362216 & 3     & 325   & 3070535 & 1     & 1     & 2227182 & 47    & 3070535 & 2593865 & 3     & 209   & 256   & 21\%  & 0.50\% \\
    a125\_100\_4 & 3056428 & 2381167 & 2     & 334   & 3070535 & 1     & 1     & 2244017 & 54    & 3070535 & 2529404 & 2     & 208   & 262   & 22\%  & 0.46\% \\
    b125\_100\_1 & 2640798 & 1794710 & 7     & 5297  & 2850664 & 5     & 1     & 1601985 & 58    & 2640798 & 2082726 & 7     & 989   & 1047  & 80\%  & 0.00\% \\
    b125\_100\_2 & 2550592 & 1790869 & 7     & 2086  & 2808259 & 9     & 1     & 1600582 & 56    & 2568522 & 2046667 & 7     & 848   & 903   & 57\%  & 0.70\% \\
    b125\_100\_3 & 2604906 & 1792133 & 4     & 4059  & 2782609 & 5     & 1     & 1607076 & 58    & 2604906 & 1977647 & 4     & 435   & 494   & 88\%  & 0.00\% \\
    b125\_100\_4 & 2580595 & 1792581 & 7     & 2686  & 2778713 & 4     & 1     & 1587315 & 60    & 2637611 & 1975516 & 7     & 358   & 419   & 84\%  & 2.21\% \\
    c125\_100\_1 & 2491714 & 1663455 & 10    & 7805  & 2669965 & 9     & 1     & 1491505 & 61    & 2491714 & 1953129 & 10    & 1685  & 1746  & 78\%  & 0.00\% \\
    c125\_100\_2 & 2468480 & 1674102 & 9     & 6296  & 2674261 & 8     & 1     & 1487416 & 58    & 2518055 & 1957245 & 10    & 519   & 577   & 91\%  & 2.01\% \\
    c125\_100\_3 & 2559381 & 1672005 & 8     & 11381 & 2685807 & 7     & 1     & 1496552 & 59    & 2559381 & 1957575 & 8     & 1177  & 1236  & 89\%  & 0.00\% \\
    c125\_100\_4 & 2538550 & 1691148 & 8     & 6407  & 2683676 & 9     & 1     & 1503317 & 61    & 2561098 & 2016969 & 8     & 726   & 786   & 88\%  & 0.89\% \\
\midrule
    a150\_100\_1 & 3768087 & 2906284 & 1     & 371   & 3768087 & 1     & 2     & 2691219 & 57    & 3768087 & 3200250 & 3     & 338   & 395   & -6\%  & 0.00\% \\
    a150\_100\_2 & 3768087 & 2891681 & 1     & 457   & 3768087 & 1     & 1     & 2695660 & 56    & 3768087 & 3212545 & 2     & 329   & 384   & 16\%  & 0.00\% \\
    a150\_100\_3 & 3741364 & 2882917 & 3     & 340   & 3768087 & 1     & 1     & 2691438 & 58    & 3741364 & 3185719 & 3     & 306   & 363   & -7\%  & 0.00\% \\
    a150\_100\_4 & 3768087 & 2911017 & 1     & 591   & 3768087 & 1     & 1     & 2697003 & 63    & 3768087 & 3241759 & 2     & 337   & 400   & 32\%  & 0.00\% \\
    b150\_100\_1 & 3271859 & 2159537 & 4     & 15483 & 3487710 & 4     & 1     & 1916485 & 68    & 3271859 & 2651812 & 4     & 2357  & 2426  & 84\%  & 0.00\% \\
    b150\_100\_2 & 3227987 & 2160753 & 5     & 9677  & 3457623 & 5     & 2     & 1935777 & 68    & 3227987 & 2546429 & 5     & 975   & 1043  & 89\%  & 0.00\% \\
    b150\_100\_3 & 3150075 & 2148658 & 6     & 5986  & 3390435 & 8     & 1     & 1920876 & 69    & 3150075 & 2582861 & 6     & 1081  & 1150  & 81\%  & 0.00\% \\
    b150\_100\_4 & 3342783 & 2190488 & 5     & 9596  & 3637438 & 8     & 1     & 1927555 & 69    & 3342783 & 2602414 & 5     & 2225  & 2294  & 76\%  & 0.00\% \\
    c150\_100\_1 & 2979389 & 1988908 & 9     & 10008 & 3196861 & 5     & 1     & 1790894 & 71    & 2979389 & 2408302 & 9     & 1520  & 1591  & 84\%  & 0.00\% \\
    c150\_100\_2 & 3109105 & 1985389 & 7     & 21734 & 3291990 & 6     & 2     & 1766696 & 72    & 3109105 & 2404613 & 7     & 949   & 1021  & 95\%  & 0.00\% \\
    c150\_100\_3 & 2937767 & 1982388 & 8     & 10590 & 3079664 & 4     & 1     & 1770318 & 71    & 2954519 & 2379867 & 7     & 2369  & 2440  & 77\%  & 0.57\% \\
    c150\_100\_4 & 3165327 & 1997014 & 10    & 65134 & 3189247 & 7     & 1     & 1775186 & 75    & 3176587 & 2424997 & 12    & 5842  & 5917  & 91\%  & 0.36\% \\
    \bottomrule
  \label{ADAtable}%
\end{longtable}%
\end{landscape}

With the exception of the first group ($m=75$, $n=50$) and a few instances in the rest, the times obtained by ADA when it meets the optimal solution are considerably lower than those obtained by Xpress. It can be observed particularly in the last group, For example, the instance c\_150\_100\_2 had a reduction of $95\%$ on the total time. ADA is always close to the optimal values, in fact, the GAP average is only $0.43\%$.

\section{Conclusions} \label{conclusions}

In this paper we have proposed a heuristic method to solve the SPLPO inspired by techniques introduced in \citep{Cornuejols1977,Beltran2009}. The assignment and VUBs constraints in SPLPO were relaxed to formulate a Lagrangean problem. We solved its dual with a subgradient method and used a vector of multipliers as a starting point of an ascent algorithm for the dual of a semi-Lagrangean formulation.  Nevertheless, a better starting point should be the multipliers obtained by relaxing only the assignment constraints, the same family of constraints relaxed in our proposed SPLPO semi-Lagrangean problem. However, the sequence of linear subproblems that need to be solved in the subgradient method are not as easy to solve as those when our proposed relaxation is used. We used the variable fixing heuristic VFH because MIP problems in ADA are becoming increasingly difficult as the number of iterations grows. We have shown that the ADA algorithm works particularly well on large instances, but there must be a future discussion about the parameters settings.

\section{Acknowledgements}
This work has been supported in part by SENESCYT-Ecuador (National Secretary of Higher Education, Science, Technology and Innovation of Equator).
\noindent The research of Sergio Garc\'ia has been funded by Fundaci\'on S\'eneca (project 19320/PI/14).

\bibliography{bibliography}
\bibliographystyle{apalike}

\end{document}